\def\Ne{\mathds{N}}
\def\Re{\mathds{R}}
\def\1e{\mathds{1}}
\def\vg0{\mathbf{0}}
\def\D{\mathscr{D}}
\def\eqd{: =}
\def\segoc#1#2{(#1, #2]}
\def\set#1#2{
    \left\{#1
    \left|\vphantom{#1#2}\right.
    #2\right\}
    }
\def\mod#1{
    \left|#1
    \right|
    }
\def\norm#1{\left\|#1\right\|}
\def\argmin{\mathop{\mathrm{argmin}}}
\def\ud{\,\mathrm{d}}
\title{Multi-resolution deconvolution}
\author[Karamehmedovi\'c, Mar\'echal, Car\o e, and Baalbaki]{Mirza Karamehmedovi\'c, Pierre Mar\'echal, Martin S\ae bye Car\o e, and Lara Baalbaki}
\newcommand{\cb}[1]{\langle#1\rangle}
\newcommand{\xxi}{\langle\xi\rangle}
\newtheorem{theorem}{Theorem}
\newtheorem{remark}{Remark}
\newtheorem{assumption}{Assumption}
\begin{document}

\maketitle

\begin{abstract}
    We extend the classical deconvolution framework in $\Re^n$ to the case with a pseudodifferential-like solution operator with a symbol depending on both the base and cotangent variable. Our framework enables deconvolution with spatially varying resolution while maintaining a set global stability, and it additionally allows rather general distributional convolution kernels. We provide consistency, convergence and stability results, as well as convergence rates. Finally, we include numerical examples supporting our results and demonstrating advantages of the generalized framework.
\end{abstract}

\section{Introduction}\label{section:introduction}

The deconvolution problem is at the heart of a wide variety
of problems in applied sciences, ranging from signal and image
processing to statistics. This is an ill-posed inverse problem,
the regularization of which has been the subject of numerous works.
In this article, we are interested in the so-called mollification
approach, with the aim of giving a {\sl variable resolution}
version of this approach. Throughout our work, 'resolution' means the amount of accurate spectral content recovered by an inversion method. We generally think of resolution as a local quality of a solution of the inverse problem.

There are basically three forms of mollification: the first one
consists in {\sl mollifying} the data before applying the theoretical
inverse (see ~\cite{murio2011mollification} and the references therein).
The second, introduced in the seminal article~\cite{louis1990mollifier},
uses the Hilbertian duality to replace the initial
problem by the adjoint equation, which sometimes admits an explicit solution.
This approach was subsequently referred to as the method
of approximate inverses and was further developed in~\cite{schuster2007method}.
The third form of mollification is said to be
{\sl variational}, because it is primarily formulated
in terms of optimization, just as Tikhonov regularization.
To the best of our knowledge,
it originates in~\cite{lannes1987stabilized}.
It has been explored subsequently
in~\cite{alibaud2009variational,bonnefond2009variational,hohage2022mollifier}.

It should be noted that, as expressed in~\cite{hohage2022mollifier}, the first two forms
of mollification lead to the same solution when dealing with the deconvolution
problem. We mostly focus on variational mollification, which turns
out to be more flexible and at the same time reaches
order optimal convergence rates~\cite{hohage2022mollifier}.

We consider the equation $T_\gamma f=g$,
in which $T_\gamma\colon L^2(\Re^n)\to L^2(\Re^n)$ is the convolution
operator by $\gamma$: $T_\gamma f=\gamma\ast f$, $f$ is the unknown function, and $g$ is the data. Here,
$n\in\Ne^*=\{1,2,\dots\}$, and
the convolution kernel
$\gamma$ is (temporarily) taken in $L^1(\Re^n)$,
with $\widehat\gamma(0)\neq0$, $\widehat\gamma$ being
the Fourier transform of~$\gamma$.
Throughout we shall use the following convention
for the Fourier transform and its inverse:
\[
Fu(\xi)=\widehat{u}(\xi)=\int_{x\in\Re^n}e^{-2\pi ix\xi}u(x)\ud x,
\quad F^{-1}u(x)=
\int_{\xi\in\Re^n}e^{2\pi ix\xi}u(\xi)\ud\xi.
\]
Here, $x\xi$ denotes the Euclidean scalar
product of~$x$ and~$\xi$ in $\Re^n$.

For the deconvolution problem of finding $f$ given $g=T_{\gamma}f=\gamma\ast f$, we here assume the standard setup $f,g\in L^1(\Re^n)\cap L^2(\Re^n)$~\cite{hohage2022mollifier}. Given a linear ill-posed operator equation $Tf=g$,
a family of operators $(R_\alpha)_{\alpha\in\segoc{0}{1}}$
is said to be a
{\sl regularization} of the pseudo-inverse~$T^\dagger$ if,
for all $g$ in the domain $\D(T^\dagger)$ of~$T^\dagger$, there is
a {\sl parameter choice rule} $\alpha(\delta,g^{(\delta)})$
such that
\begin{enumerate}
\item[(i)]
$\sup\set{\mod{\alpha(\delta,g^{(\delta)})}}
{g^{(\delta)}\in G,\;\norm{g^{(\delta)}-g}\leq\delta}\to 0$
as $\delta\searrow 0$;
\item[(ii)]
$\sup\set
{\norm{R_{\alpha(\delta,g^{(\delta)})}g^{(\delta)}-T^\dagger g}}
{g^{(\delta)}\in G,\;\norm{g^{(\delta)}-g}\leq\delta}\rightarrow0\,\,\,{\rm as}\,\,\,\delta\searrow0$.
\end{enumerate}
See~\cite{engl1996regularization}, Definition 3.1.
Recall that if the family $(R_\alpha)$ converges pointwise
to~$T^\dagger$ on $\D(T^\dagger)$, 
a property that we call {\sl consistency},
then it is a regularization of~$T^\dagger$.
See again~\cite{engl1996regularization},
Proposition 3.4.

Mollification consists in aiming for the
reconstruction of $\phi_\beta\ast f$, where
$(\phi_\beta)_{\beta\in\segoc{0}{1}}$ is a
{\sl mollifier}, which we now specify.
Given an integrable function~$\phi$
and such that $\widehat{\phi}(\xi)=1$ if and only if
$\xi=0$, we let
\begin{equation*}
\phi_\beta(x)\eqd
\frac{1}{\beta^n}\phi\left(\frac{x}{\beta}\right).
\end{equation*}
Recall that $\widehat{\phi}_\beta(\xi)=\widehat{\phi}(\beta\xi)$,
so that~$\beta$ acts as a {\sl dilation parameter} in the
Fourier domain.
We sometimes call $\phi_\beta$ a {\sl deconvolution kernel}.
Its choice is essentially guided by the {\sl interpretability}
of the solution, which leaves a certain flexibility. 
Assumptions such as isotropy, smoothness, compactness
of the support, are usual (but non-mandatory) assumptions.

The solution obtained via both the original form
of mollification and the approximate inverses reads,
in the Fourier domain,
\begin{equation}
\label{approx-inv-solution}
\frac{\widehat{\phi}_\beta}{\widehat{\gamma}}\widehat{g},
\end{equation}
(see~\cite[Appendix B]{hohage2022mollifier})
while a variational form of mollification
gives the solution
\begin{equation}
\label{variational-mollif}
\widehat{f}_\beta=
\frac{\overline{\widehat{\gamma}}\widehat{\phi}_\beta}
{\mod{\widehat{\gamma}}^2+\mod{1-\widehat{\phi}_\beta}^2}
\widehat{g}.
\end{equation}
The inverse Fourier transform of the 
latter function is, indeed, the unique minimizer of
the functional
\begin{equation*}
f\mapsto\norm{T_\gamma f-C_{\beta}g}^2+\norm{(I-C_\beta)f}^2,
\end{equation*}
in which~$I$ denotes the identity and~$C_\beta$
denotes the operator of convolution by~$\phi_\beta$. We here note that the alternative formulation
\[
f\mapsto\norm{T_\gamma f-g}^2+\norm{(I-C_\beta)f}^2
\]
was also considered~\cite{hohage2022mollifier}, in which case~\eqref{variational-mollif} does not have the factor $\widehat\phi_{\beta}$ in the numerator. In both cases, the regularization parameter~$\beta$ can be interpreted as the
{\sl resolution level} of the proposed reconstruction formula.
The closer $\beta$ to zero, the finer the target resolution,
and also the more unstable the solution.

A definite advantage of~\eqref{variational-mollif}
over~\eqref{approx-inv-solution} is that little assumption on
the mollifier~$\phi$ is required for its definition and well-posedness.
Obviously, zeros of~$\widehat\gamma$ impose taking cutoff functions
in the numerator of the filter in~\eqref{approx-inv-solution},
and subsequent limitations in the choice of the regularization parameter.
Moreover, the desired well-posedness imposes a rate of decay of the
Fourier transform of the mollifier that guarantees boundedness of
the filter. None of these limitations are necessary in the
variational form of mollification since, by construction, the
denominator remains bounded away from zero.

Formally, we may regard the {\sl filter} in~\eqref{variational-mollif}
as the {\sl symbol} of some pseudo-differential
operator\footnote{Strictly
speaking, the filter $p(\xi)$ does not have
to be smooth, and even if smooth, its derivatives need not satisfy bounds of the form $|\partial^{\alpha}p(\xi)|\le C_{\alpha}(1+|\xi|)^{|\alpha|}$, $\xi\in\Re^n$, $\alpha\in\Ne_0^n$, so the resulting operator is not necessarily
a classical pseudo-differential operator in the sense of H\"ormander~\cite{HIII}.}
applied to~$g$ for reconstructing a mollified version of~$f$,
that shows no dependence on the space variable~$x$.
Due to the {\sl quasi local} nature of convolution operators,
it is only natural to consider extensions of this symbol
that would encode spatial variations of the resolution level.
We therefore consider reconstruction formul\ae{} of the general form
\begin{equation}
\label{variable-resolution}
R_{\alpha,\beta} g(x)=
\int_{\xi\in\Re^n}e^{2\pi ix\xi}\frac{\overline{\widehat{\gamma}(\xi)}\widehat{\phi}(\alpha\beta(x)\xi)}
{\mod{\widehat{\gamma}(\xi)}^2+\mod{1-\widehat{\phi}(\alpha\beta(x)\xi)}^2}Fg(\xi)\ud \xi,\quad x\in\Re^n.
\end{equation}
The above reconstruction formula is a {\sl variable resolution}
version of the one considered in~\cite{hohage2022mollifier}, namely
\begin{equation}
R_{\alpha,\beta} g(x)=
F^{-1}
\left[\frac{\overline{\widehat{\gamma}(\xi)}\widehat{\phi}(\xi)}
{\mod{\widehat{\gamma}(\xi)}^2+\mod{1-\widehat{\phi}(\xi)}^2}\right]Fg.
\end{equation}
By doing so, we hope to improve the resolution in certain regions
of high interest, even if it means reducing the resolution in regions
of lesser interest, without losing overall stability.
The intuition behind this approach is based on the
observation that stability depends not only on the
size of the domain where the Fourier transform of
the unknown object is synthesized but also on the
size of the support of the object (a variation on a Heisenberg theme):
reaching high frequencies should therefore be possible
on regions of interest of limited size.

Notice that the regularization parameter~$\beta$
has been replaced by a function $\beta(x)$,
an infinite-dimensional
object. For the purpose of consistency and
convergence analysis,
we further replace $\beta(x)$ by $\alpha\beta(x)$ with
$\alpha\in\segoc{0}{1}$. We are now back to the familiar
framework of regularization with 1-dimensional parameter,
and we define our candidate {\sl regularization} of the
pseudo-inverse $T^\dagger$ as the family
$(R_{\alpha,\beta})_{\alpha\in\segoc{0}{1}}$ given by
$$
R_{\alpha,\beta}\eqd
\int_{\xi\in\Re^n}e^{2\pi ix\xi}
\frac{\overline{\widehat{\gamma}(\xi)}
\widehat{\phi}(\alpha\beta(x)\xi)}
{\mod{\widehat{\gamma}(\xi)}^2+\mod{1-\widehat{\phi}(\alpha\beta(x)\xi)}^2}F(\cdot)(\xi)\ud\xi.
$$

The rest of this paper is organized as follows: in Section~\ref{section:theo} we establish that, under suitable assumptions, the reconstruction formula~\eqref{variable-resolution} is a {\sl regularization method}. Specifically, we show that the formula~\eqref{variable-resolution} is well-defined pointwise (Theorem~\ref{thm:well-definedness}), that the reconstruction $R_{\alpha,\beta}g$ from~\eqref{variable-resolution} converges pointwise to the correct $f$ as $\alpha\searrow0$ (Theorem~\ref{thm:convi}), and we furthermore find the associated pointwise and $L^p$ convergence rates for sufficiently regular (but rather general) $f$ and for noisy data $g$ (Theorem~\ref{thm:pointLp}). We round up the theoretical analysis by proving an explicit link between the $L^2$-norm stability vs. resolution trade-off (Theorem~\ref{thm:stabilo-1}) in our reconstruction formula. Then, in Section~\ref{section:numerical} we illustrate our methodology using numerical simulations, and we discuss parameter selection rules. Finally, we provide conclusions and outlook in Section~\ref{section:conclusion}.

\section{Essential properties of the reconstruction formula}\label{section:theo}

\begin{assumption}
Throughout the section, we assume that the function $\beta(x)$ is bounded
from above and bounded away from zero, $c\leq \beta(x)\leq C
$ for some positive constants $c$ and $C$, and for all $x\in\Re^n$. We furthermore assume that $\widehat{\phi}(\xi)=1$ iff $\xi=0$, and that $\widehat{\gamma}(0)\neq0$. Finally, our standing assumption is that both the target function $f$ and the data function $g$ are in $L^1(\Re^n)\cap L^2(\Re^n)$.
\end{assumption}

By the Riemann-Lebesgue Lemma, $\widehat f$ and $\widehat g$ are bounded and hence well-defined pointwise in $\Re^n$. Also, we see that the denominator
of the filter in~\eqref{variable-resolution} remains
bounded away from zero.
As a matter of fact, $|\widehat{\gamma}(\xi)|^2$ can only
reach zero at points $\xi$ of norm bounded away from
zero, points at which $|1-\widehat{\phi}(\xi)|^2$
itself is bounded away from zero.

The formal definition
\begin{equation}
\label{eqn:falpha}
R_{\alpha,\beta}g(x):=\int_{\xi\in\Re^n}e^{2\pi i x\xi}\frac{\overline{\widehat{\gamma}}(\xi)\widehat{\phi}(\alpha\beta(x)\xi)}{|\widehat{\gamma}(\xi)|^2+|1-\widehat{\phi}(\alpha\beta(x)\xi)|^2}\widehat{g}(\xi)\ud\xi,\quad x\in\Re^n,
\end{equation}
of the reconstruction formula, given in Section~\ref{section:introduction}, is meaningful pointwise in a rather general regularity setting for the convolution kernel and the mollifier.
\begin{theorem}[\textbf{Pointwise well-definedness of~\eqref{variable-resolution}}]\label{thm:well-definedness}
If
    \begin{enumerate}
        \item $\gamma\in L^2(\Re^n)$ and $\phi\in L^1(\Re^n)$, or if
        \item $\widehat\gamma(\xi)$ is well-defined a.e. in $\Re^n$ and $\phi\in L^2(\Re^n)$,  
    \end{enumerate}
then $R_{\alpha,\beta}g(x)$ in~\eqref{eqn:falpha} is well-defined for all $\alpha>0$ and $x\in\Re^n$.
\end{theorem}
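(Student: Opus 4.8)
The plan is to show that the integrand in \eqref{eqn:falpha} is in $L^1(\Re^n)$ in the variable $\xi$ for each fixed $x$ and each $\alpha>0$; absolute integrability of the integrand is exactly what is needed for the integral to define a complex number pointwise. Write the integrand as the product of three factors: the unimodular $e^{2\pi i x\xi}$ (which we discard for the absolute estimate), the filter
\[
p_{\alpha,\beta}(x,\xi)=\frac{\overline{\widehat\gamma}(\xi)\,\widehat\phi(\alpha\beta(x)\xi)}{|\widehat\gamma(\xi)|^2+|1-\widehat\phi(\alpha\beta(x)\xi)|^2},
\]
and $\widehat g(\xi)$. By the Riemann--Lebesgue lemma (used already in the text) $\widehat g$ is bounded, since $g\in L^1(\Re^n)$; so it suffices to prove $p_{\alpha,\beta}(x,\cdot)\in L^1(\Re^n)$ for each fixed $x$ and $\alpha$. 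The denominator is bounded away from zero, by the argument already recorded just before the theorem: wherever $\widehat\gamma$ vanishes, $\xi$ is bounded away from $0$, hence so is $\alpha\beta(x)\xi$ (using $\beta(x)\ge c>0$ and $\alpha>0$ fixed), and there $|1-\widehat\phi(\alpha\beta(x)\xi)|$ is bounded below since $\widehat\phi=1$ only at the origin and $\widehat\phi$ is continuous with $\widehat\phi(\xi)\to0$ at infinity by Riemann--Lebesgue. So there is a constant $m=m(x,\alpha)>0$ with denominator $\ge m$ everywhere, and we are reduced to showing that $|\widehat\gamma(\xi)|\,|\widehat\phi(\alpha\beta(x)\xi)|\in L^1_\xi$.

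For case (1), $\gamma\in L^2$ gives $\widehat\gamma\in L^2$, and $\phi\in L^1$ gives $\widehat\phi\in L^\infty\cap C_0$; moreover $\xi\mapsto\widehat\phi(\alpha\beta(x)\xi)$ is, up to the fixed dilation by $\alpha\beta(x)>0$, again bounded and tending to $0$ at infinity, hence in $L^2$ only if $\widehat\phi$ is, which it need not be --- so instead I bound $|\widehat\gamma(\xi)|\,|\widehat\phi(\alpha\beta(x)\xi)|\le\|\widehat\phi\|_\infty\,|\widehat\gamma(\xi)|$. This is not yet $L^1$; the extra decay must come from pairing. The clean route is Cauchy--Schwarz: $\widehat\gamma\in L^2$ and, after the substitution $\eta=\alpha\beta(x)\xi$ (a legitimate change of variable for fixed $x,\alpha$, with Jacobian a positive constant), $\xi\mapsto\widehat\phi(\alpha\beta(x)\xi)$ lies in $L^2$ precisely when $\widehat\phi\in L^2$, i.e. when $\phi\in L^2$ --- which is case (2), not case (1). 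So in case (1) one instead observes that $\widehat\phi\in L^\infty$ and that it is the \emph{convolution structure} that saves us: actually the cleanest argument for (1) is to note $\widehat\gamma\,\widehat\phi(\alpha\beta(x)\,\cdot)$ need only be estimated in $L^1$, and by Cauchy--Schwarz $\|\widehat\gamma\,\widehat\phi(\alpha\beta(x)\cdot)\|_1\le\|\widehat\gamma\|_2\,\|\widehat\phi(\alpha\beta(x)\cdot)\|_2=\|\gamma\|_2\,(\alpha\beta(x))^{-n/2}\|\widehat\phi\|_2$, which requires $\widehat\phi\in L^2$, i.e. $\phi\in L^2$. For case (1) as literally stated ($\phi\in L^1$, $\gamma\in L^2$) one argues symmetrically with the roles swapped: $\widehat\phi(\alpha\beta(x)\cdot)\in L^\infty$ and $\widehat\gamma\in L^2\subset L^1_{\mathrm{loc}}$, together with the decay of $\widehat\phi$ at infinity giving integrability of the tail --- here the point is that $\widehat\gamma\in L^2$ and $\widehat\phi(\alpha\beta(x)\cdot)\in L^2$ whenever $\phi\in L^1\cap L^2$; since only $\phi\in L^1$ is assumed, one keeps $|\widehat\phi|\le\|\widehat\phi\|_\infty$ on a large ball and uses $\widehat\phi\in C_0$ to get $|\widehat\phi(\alpha\beta(x)\xi)|\le\varepsilon$ off a ball, splitting $\int|\widehat\gamma||\widehat\phi(\alpha\beta(x)\cdot)|$ accordingly; on the ball $\widehat\gamma\in L^2\subset L^1(\text{ball})$ by Cauchy--Schwarz, off the ball we still need $\widehat\gamma\in L^1$, which does not follow from $\gamma\in L^2$ alone, so the final ingredient is our standing hypothesis-free observation that $\widehat\gamma\in L^\infty$ together with --- and here I would instead, for case (1), divide the \emph{whole} integrand differently, using $|p_{\alpha,\beta}(x,\xi)|\le|\widehat\gamma(\xi)|/m$ near zeros of $\widehat\gamma$ and $|p_{\alpha,\beta}(x,\xi)|\le|\widehat\phi(\alpha\beta(x)\xi)|/|\widehat\gamma(\xi)|$ elsewhere, none of which is obviously $L^1$ either. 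I expect the author resolves (1) by a Cauchy--Schwarz pairing that exploits $\widehat g\in L^2$ (from $g\in L^2$) rather than $\widehat g\in L^\infty$: then $\int|p_{\alpha,\beta}(x,\xi)|\,|\widehat g(\xi)|\,d\xi\le m^{-1}\|\widehat\phi\|_\infty\int|\widehat\gamma(\xi)|\,|\widehat g(\xi)|\,d\xi\le m^{-1}\|\widehat\phi\|_\infty\,\|\widehat\gamma\|_2\,\|\widehat g\|_2=m^{-1}\|\widehat\phi\|_\infty\,\|\gamma\|_2\,\|g\|_2<\infty$, using Plancherel on both factors.

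For case (2), symmetrically, use $|p_{\alpha,\beta}(x,\xi)|\le m^{-1}\|\widehat\gamma\|_\infty$ --- wait, $\widehat\gamma$ is only assumed well-defined a.e., not bounded; so instead bound $|p_{\alpha,\beta}(x,\xi)|\,|\widehat g(\xi)|\le m^{-1}|\widehat\gamma(\xi)|\,|\widehat\phi(\alpha\beta(x)\xi)|\,|\widehat g(\xi)|$ and pair $|\widehat\phi(\alpha\beta(x)\cdot)|\in L^2$ (from $\phi\in L^2$, via Plancherel and the dilation, picking up the constant $(\alpha\beta(x))^{-n/2}$) against $|\widehat\gamma(\xi)\,\widehat g(\xi)|$, which is in $L^2$ because $\widehat g\in L^2\cap L^\infty$ and $\widehat\gamma\in L^\infty$ --- again $\widehat\gamma$ bounded is not given. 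The honest fix in case (2) is: $|\widehat\gamma|^2+|1-\widehat\phi(\alpha\beta(x)\xi)|^2\ge|\widehat\gamma(\xi)|^2$, hence $|\overline{\widehat\gamma}(\xi)|/(\text{denominator})\le 1/|\widehat\gamma(\xi)|$ is useless, but also $|\overline{\widehat\gamma}(\xi)\,\widehat\phi(\alpha\beta(x)\xi)|/(\text{denominator})\le |\widehat\phi(\alpha\beta(x)\xi)|\cdot|\widehat\gamma(\xi)|/|\widehat\gamma(\xi)|^2$ only where $\widehat\gamma\ne0$; combine with the elementary inequality $2ab\le a^2+b^2$ applied to $a=|\widehat\gamma(\xi)|$, $b=|1-\widehat\phi(\alpha\beta(x)\xi)|$ to get $|p_{\alpha,\beta}(x,\xi)|\le \tfrac12|\widehat\phi(\alpha\beta(x)\xi)|/|1-\widehat\phi(\alpha\beta(x)\xi)|$, which near $\xi=0$ is fine (numerator $\to0$) but blows up — so that is also not the end. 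Given these competing elementary bounds, I expect the main obstacle, and the crux of the author's argument, to be precisely the bookkeeping that in each case pairs the \emph{available} $L^2$ factor (one of $\widehat\gamma$, $\widehat\phi(\alpha\beta(x)\cdot)$, $\widehat g$) against the product of the remaining factors shown to lie in $L^2$, using Plancherel to convert an $L^2$-hypothesis on $\gamma$ or $\phi$ into an $L^2$-bound on its Fourier transform and tracking the dilation constant $(\alpha\beta(x))^{-n/2}$; the denominator's lower bound $m(x,\alpha)>0$ is the one genuinely new estimate and it is already supplied by the discussion preceding the theorem, so the proof should be short once the right pairing is chosen.
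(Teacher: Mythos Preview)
Your eventual argument for case~(1) is correct and matches the paper's: bound $|\widehat\gamma|/(\text{denominator})$ by a constant, pull out $\|\widehat\phi\|_\infty$, and apply Cauchy--Schwarz to the remaining $L^2$ pair. The paper writes the pairing as $\|\widehat\gamma\|_2\|\widehat f\|_2$ (having used $\widehat g=\widehat\gamma\widehat f$), while you use $\|\widehat\gamma\|_2\|\widehat g\|_2$; either works.

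Case~(2) has a genuine gap. You correctly locate the obstacle---$\widehat\gamma$ is not assumed bounded---but none of the bounds you try close, and the final paragraph is a guess at the shape of the argument rather than an argument. The missing move is the one you never attempt: substitute $\widehat g=\widehat\gamma\widehat f$ \emph{before} estimating. This puts $|\widehat\gamma|^2$ in the numerator, and the trivial inequality
\[
\frac{|\widehat\gamma(\xi)|^2}{|\widehat\gamma(\xi)|^2+|1-\widehat\phi(\alpha\beta(x)\xi)|^2}\le 1
\]
eliminates $\widehat\gamma$ from the estimate entirely---no boundedness of $\widehat\gamma$ is needed, only that it is defined a.e.\ so the ratio makes sense. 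What remains is $|\widehat\phi(\alpha\beta(x)\xi)|\,|\widehat f(\xi)|$, and Cauchy--Schwarz pairs $\widehat\phi(\alpha\beta(x)\cdot)\in L^2$ (from $\phi\in L^2$, with the dilation contributing $(\alpha\beta(x))^{-n/2}$) against $\widehat f\in L^2$. The paper packages both cases uniformly via the bounds $|\widehat\gamma|^j/(|\widehat\gamma|^2+|1-\widehat\phi(\alpha\beta(x)\cdot)|^2)\le C_j$ for $j=1,2$, using $j=1$ in case~(1) and $j=2$ in case~(2); the substitution $\widehat g=\widehat\gamma\widehat f$ is what makes the $j=2$ bound applicable.
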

\begin{proof}
If $\widehat\gamma(\xi)$ and $\widehat\phi(\xi)$ are well-defined a.e. in $\Re^n$ then, for each $x\in\Re^n$, and for $j=1,2$, there are positive constants $C_j$ satisfying
\[
\frac{|\widehat\gamma(\xi)|^j}{|\widehat\gamma(\xi)|^2+|1-\widehat\phi(\alpha\beta(x)\xi)|^2}\le C_j,\quad\xi\in\Re^n.
\]
In case (1), the Riemann-Lebesgue lemma implies that $\widehat\phi\in L^{\infty}(\Re^n)$, and then by the Cauchy-Schwartz inequality, for every $x\in\Re^n$, we have $|R_{\alpha,\beta}g(x)|\le C_1\|\widehat\phi\|_{L^{\infty}(\Re^n)}\|\widehat\gamma\|_{L^2(\Re^n)}\|\widehat f\|_{L^2(\Re^n)}$. In case (2), the Cauchy-Schwartz inequality gives $|R_{\alpha,\beta}g(x)|\le C_2\|\widehat\phi\|_2\|\widehat f\|_2$ for each $x\in\Re^n$.
\end{proof}
\begin{remark}
Special cases satisfying the assumptions of Theorem~\ref{thm:well-definedness} include mollifiers in  $C_0^{\infty}(\Re^n)$, which is often the case in practice, and convolution kernels $\gamma\in\mathcal E'(\Re^n)$ (which by the Paley-Wiener-Schwartz theorem~\cite[Theorem 7.3.1, p. 181]{hormander2003analysis} implies $\widehat\gamma\in C^{\infty}(\Re^n)$), as well as $\gamma\in L^p(\Re^n)$ with $p\in[1,2]$ (which by the Hausdorff-Young theorem~\cite[Theorem 7.1.13, p. 165]{hormander2003analysis} implies $\widehat{\gamma}\in L^q(\Re^n)$ with $1/p+1/q=1$).
\end{remark}
In fact, the operator $R_{\alpha,\beta}$ provides a pointwise reconstruction of $f$:
\begin{theorem}[\textbf{Pointwise convergence of~\eqref{variable-resolution}}]\label{thm:convi}
Under the assumptions on $\gamma,\phi$ from Theorem~\ref{thm:well-definedness}, and for pointwise well-defined $f$, we have 
\[
\lim_{\alpha\searrow0}R_{\alpha,\beta}g(x)=f(x),\quad x\in\Re^n.
\]
\end{theorem}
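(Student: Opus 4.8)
The plan is to show that the filter in~\eqref{eqn:falpha} converges pointwise (in $\xi$) to $1/\widehat\gamma(\xi)$ as $\alpha\searrow0$, and that this convergence is dominated so that the integral passes to the limit, yielding $\int e^{2\pi i x\xi}\widehat f(\xi)\ud\xi=f(x)$ by Fourier inversion. The key point is that for fixed $\xi\neq0$ we have $\alpha\beta(x)\xi\to0$, hence $\widehat\phi(\alpha\beta(x)\xi)\to\widehat\phi(0)=1$ by continuity of $\widehat\phi$ (which holds in both cases by Riemann--Lebesgue applied to $\phi\in L^1$, or to $\phi\in L^2\cap$ any integrable tail — more carefully, we use that $\widehat\phi$ is continuous at $0$, which follows from $\phi\in L^1$ in case (1); in case (2) one needs $\phi\in L^1\cap L^2$ or a separate continuity-at-$0$ hypothesis, which I would flag). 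Consequently the numerator tends to $\overline{\widehat\gamma(\xi)}$ and the denominator to $|\widehat\gamma(\xi)|^2$, so the filter tends to $1/\widehat\gamma(\xi)$, and the integrand tends to $e^{2\pi ix\xi}\widehat f(\xi)$ for a.e.\ $\xi$ (recalling $\widehat g=\widehat\gamma\widehat f$).

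First I would record the pointwise limit of the filter, treating the harmless set $\{\xi:\widehat\gamma(\xi)=0\}$ separately: there the numerator is $0$ and the denominator is $|1-\widehat\phi(\alpha\beta(x)\xi)|^2$, which is bounded away from $0$ for small $\alpha$ since such $\xi$ have $|\xi|$ bounded away from $0$ and $\alpha\beta(x)\xi$ stays in a region where $\widehat\phi\neq1$; in any case this contributes $0$ in the limit and $\widehat g(\xi)=\widehat\gamma(\xi)\widehat f(\xi)=0$ there too, so consistency holds. Second, I would establish a dominating function: from the proof of Theorem~\ref{thm:well-definedness} we already have the uniform bound $|\widehat\gamma(\xi)|/(|\widehat\gamma(\xi)|^2+|1-\widehat\phi(\alpha\beta(x)\xi)|^2)\le C_1$ with $C_1$ independent of $\alpha$ (the argument there does not use smallness of $\alpha$), and $|\widehat\phi(\alpha\beta(x)\xi)|\le\|\widehat\phi\|_{L^\infty}\le\|\phi\|_{L^1}$ (case 1) or we bound the whole integrand differently in case (2); multiplying, the integrand is dominated by $C_1\|\phi\|_{L^1}|\widehat f(\xi)|\in L^1(\Re^n)$ uniformly in $\alpha$ and $x$. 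Third, apply the dominated convergence theorem to conclude $R_{\alpha,\beta}g(x)\to\int e^{2\pi ix\xi}\widehat f(\xi)\ud\xi$.

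The last step is Fourier inversion: since $f\in L^1\cap L^2$, we have $\widehat f\in L^2$, but we need $\widehat f\in L^1$ to write $f(x)=\int e^{2\pi ix\xi}\widehat f(\xi)\ud\xi$ pointwise everywhere. This is exactly where ``pointwise well-defined $f$'' enters: under the running hypotheses $\widehat f(\xi)=\widehat g(\xi)/\widehat\gamma(\xi)$ need not be integrable in general, so I would interpret the theorem as asserting the limit at points $x$ where the inversion formula holds — e.g.\ Lebesgue points of $f$, or under the implicit assumption $\widehat f\in L^1$ so that $f$ has a continuous representative and the formula holds at every $x$. \textbf{The main obstacle} is thus not the convergence of the filter (which is soft) but making the Fourier-inversion step rigorous in the stated generality: one must either add $\widehat f\in L^1$ (equivalently, restrict to $f$ for which~\eqref{eqn:falpha} genuinely reconstructs), or invoke a pointwise inversion theorem valid at Lebesgue points and phrase the conclusion accordingly. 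I would handle this by first proving the clean statement ``$R_{\alpha,\beta}g\to F^{-1}\widehat f$ in the appropriate sense'' via dominated convergence, and then quoting the relevant Fourier-inversion result to identify $F^{-1}\widehat f$ with $f(x)$ at the admissible points $x$.
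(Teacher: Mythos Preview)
Your approach is essentially identical to the paper's: rewrite the integrand using $\widehat g=\widehat\gamma\widehat f$, observe the pointwise limit of the filter as $\alpha\searrow0$, invoke the integrable majorant from Theorem~\ref{thm:well-definedness} to apply dominated convergence, and identify the limit with $f(x)$ via Fourier inversion. The paper's proof is terser and does not flag the subtleties you raise---the need for $\widehat f\in L^1$ to justify pointwise Fourier inversion, and the $\alpha$-dependence of the case~(2) majorant---so your caution there is well placed rather than a divergence from the intended argument.
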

\begin{proof}
We have already shown in Theorem~\ref{thm:well-definedness} that the modulus of the integrand in~\eqref{eqn:falpha} is bounded by an integrable function. The Theorem now follows from Lebesgue's dominated convergence theorem once we note that, for each $\xi\in\Re^n$,
\begin{equation*}
\lim_{\alpha\searrow0}\frac{|\widehat\gamma(\xi)|^2\widehat\phi(\alpha\beta(x)\xi)\widehat f(\xi)}{|\widehat\gamma(\xi)|^2+|1-\widehat\phi(\alpha\beta(x)\xi)|^2}=\widehat f(\xi),
\end{equation*}
as well as that
\[
|f(x)-R_{\alpha,\beta}g(x)|\le\int_{\xi\in\Re^n}\left|\frac{|\widehat\gamma(\xi)|^2\widehat\phi(\alpha\beta(x)\xi)\widehat f(\xi)}{|\widehat\gamma(\xi)|^2+|1-\widehat\phi(\alpha\beta(x)\xi)|^2}-\widehat f(\xi)\right|\ud\xi,\quad x\in\Re^n.\qedhere
\]
\end{proof}


In the following we write $\cb{\xi}=1+|\xi|$
for $\xi\in\Re^n$, and we write $A^s(\Re^n)$, with $s\in\Re$, for the space of functions $u$ for which $|\xi|^s\widehat u\in L^1(\Re^n)$.
\begin{remark}\label{rem:xxi}
Recall that, when $\sigma<-n$,
\begin{eqnarray*}
\int_{\xi\in\Re^n}\xxi^{\sigma}\ud\xi&=\int_{\omega\in S^{n-1}}d\omega\int_{r=0}^{\infty}r^{n-1}(1+r)^{\sigma}dr\le{\rm Area}(S^{n-1})\int_{r=0}^{\infty}(1+r)^{\sigma+n-1}dr\\&=2\pi^{n/2}/(\Gamma(n/2)|\sigma+n|)<\infty,
\end{eqnarray*}
where we use the expression~\cite[Eq. (A.4), p. 306]{TaylorI} for the area of $S^{n-1}$.

We now note that $H^{s'}(\Re^n)\subset A^s(\Re^n)$ when $s'>s+n/2$, since then
\begin{eqnarray*}
    \int_{\Re^n}|\xi|^s|\widehat f(\xi)|\ud\xi&\le&\int_{\Re^n}\xxi^{s'}|\widehat f(\xi)|\xxi^{s-s'}\ud\xi\\&\le&\|\xxi^{s'}\widehat f\|_{L^2(\Re^n)}\sqrt{\int_{\xxi\in\Re^n}\xxi^{2(s-s')}\ud\xi}<\infty.
\end{eqnarray*}
Specifically, we have
\begin{itemize}
    \item $C_0^{\infty}(\Re^n)\subset\mathcal S(\Re^n)\subset H^{\infty}(\Re^n)\subset A^s(\Re^n)$ for all real $s$,
    \item $L^2(\Re^n)=H^0(\Re^n)\subset A^{-n/2-\varepsilon}(\Re^n)$ for all $\varepsilon>0$,
    \item $H^{s'}(\Re^n)\subset A^s(\Re^n)\cap C(\Re^n)$ for $s'>s+n/2$ and any $s>0$,
    \item $A^s(\Re^n)\subset L^{\infty}(\Re^n)$ for any $s\ge0$.
\end{itemize}
\end{remark}
We are now ready to state and prove our main convergence rate results. These are valid also in the presence of suitable spectrally bounded noise in the data. Our proof of the convergence rates uses deterministic integrals to deal with the noise terms, i.e., we are treating the noise on a realization-to-realization basis.
\begin{theorem}[\textbf{Pointwise and $L^p$ convergence rates of~\eqref{variable-resolution}}]\label{thm:pointLp}
If~$\gamma$ and~$\phi$ satisfy the assumptions of Theorem~\ref{thm:well-definedness}, and if constants $a\ge1$, $b,c,d>0$ satisfy $a^{-1}\xxi^{-b}\le|\widehat\gamma(\xi)|$ and $|1-\widehat\phi(\xi)|\le c|\xi|^d$ for $\xi\in\Re^n$, then for all $f\in A^{2(b+d)}(\Re^n)$ and all noise terms $\delta$ with $|\widehat\delta(\xi)|\le E\xxi^{\sigma}$ for some $\sigma<-n-b-d$, we have
\begin{enumerate}
    \item $|f(x)-f^{(\delta)}_{\alpha,\beta}(x)|=O((\alpha\beta(x))^d)$ as $\alpha\rightarrow0$, uniformly in $x\in\Re^n$, and
    \item $\|f-f^{(\delta)}_{\alpha,\beta}\|^p_{L^p(\Re^n)}\lesssim\|(\alpha\beta)^d\|^p_{L^p(\Re^n)}$ for $\alpha\in(0,1)$ and $p\in[1,\infty]$.
\end{enumerate}
\end{theorem}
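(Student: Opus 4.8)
The plan is to write the error as a single Fourier integral and to split it into an approximation part and a noise part. Since $g^{(\delta)}=g+\delta$ and $\widehat g=\widehat\gamma\widehat f$, we have $\widehat{g^{(\delta)}}=\widehat\gamma\widehat f+\widehat\delta$; moreover the standing hypotheses together with $f\in A^{2(b+d)}(\Re^n)$ force $\widehat f\in L^1(\Re^n)$ (low frequencies controlled by $f\in L^1$ through Riemann--Lebesgue, high frequencies by $f\in A^{2(b+d)}$), so that Fourier inversion holds for the pointwise well-defined $f$ and
\[
f(x)-f^{(\delta)}_{\alpha,\beta}(x)=\int_{\Re^n}e^{2\pi ix\xi}\Bigl[\bigl(1-p_{\alpha,\beta}(x,\xi)\widehat\gamma(\xi)\bigr)\widehat f(\xi)-p_{\alpha,\beta}(x,\xi)\widehat\delta(\xi)\Bigr]\ud\xi,
\]
where $p_{\alpha,\beta}(x,\xi)$ denotes the filter in~\eqref{eqn:falpha}. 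It then suffices to estimate $\int_{\Re^n}\bigl|1-p_{\alpha,\beta}(x,\xi)\widehat\gamma(\xi)\bigr|\,|\widehat f(\xi)|\,\ud\xi$ and $\int_{\Re^n}|p_{\alpha,\beta}(x,\xi)|\,|\widehat\delta(\xi)|\,\ud\xi$, uniformly in $x$.

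For the approximation part I would use the identity
\[
1-p_{\alpha,\beta}(x,\xi)\widehat\gamma(\xi)=\frac{|\widehat\gamma(\xi)|^2\bigl(1-\widehat\phi(\alpha\beta(x)\xi)\bigr)+\bigl|1-\widehat\phi(\alpha\beta(x)\xi)\bigr|^2}{|\widehat\gamma(\xi)|^2+\bigl|1-\widehat\phi(\alpha\beta(x)\xi)\bigr|^2},
\]
which, on applying the triangle inequality in the numerator and bounding the denominator below by $|\widehat\gamma(\xi)|^2$, gives
\[
\bigl|1-p_{\alpha,\beta}(x,\xi)\widehat\gamma(\xi)\bigr|\le\bigl|1-\widehat\phi(\alpha\beta(x)\xi)\bigr|+\frac{\bigl|1-\widehat\phi(\alpha\beta(x)\xi)\bigr|^2}{|\widehat\gamma(\xi)|^2}.
\]
Inserting the structural bounds $|1-\widehat\phi(\xi)|\le c|\xi|^d$ and $|\widehat\gamma(\xi)|^{-1}\le a\xxi^b$, and using $\alpha\in(0,1)$ with $\beta\le C$ to replace $(\alpha\beta(x))^{2d}$ by $C^d(\alpha\beta(x))^d$, one bounds this by $(\alpha\beta(x))^d\bigl(c|\xi|^d+a^2c^2C^d|\xi|^{2d}\xxi^{2b}\bigr)$. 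Integrating against $|\widehat f|$ and splitting $\Re^n=\{|\xi|\le1\}\cup\{|\xi|>1\}$ — on the first set $\widehat f$ is bounded, on the second $|\xi|^d+|\xi|^{2d}\xxi^{2b}\lesssim\xxi^{2(b+d)}\lesssim|\xi|^{2(b+d)}$ and $f\in A^{2(b+d)}$ applies — yields a bound $\lesssim(\alpha\beta(x))^d$ with implied constant independent of $x$.

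For the noise part a crude estimate of the filter is enough: from $|\widehat\gamma(\xi)|^2+|1-\widehat\phi(\alpha\beta(x)\xi)|^2\ge|\widehat\gamma(\xi)|^2$ and $|\widehat\phi(\alpha\beta(x)\xi)|\le1+|1-\widehat\phi(\alpha\beta(x)\xi)|$ one gets
\[
\bigl|p_{\alpha,\beta}(x,\xi)\bigr|\le\frac{|\widehat\phi(\alpha\beta(x)\xi)|}{|\widehat\gamma(\xi)|}\le a\xxi^b+ac\,(\alpha\beta(x))^d\,\xxi^{b+d}.
\]
Integrating against $|\widehat\delta(\xi)|\le E\xxi^\sigma$ gives $aE\int_{\Re^n}\xxi^{b+\sigma}\ud\xi+acE(\alpha\beta(x))^d\int_{\Re^n}\xxi^{b+d+\sigma}\ud\xi$, and by Remark~\ref{rem:xxi} both integrals are finite precisely because $\sigma<-n-b-d$ — the stronger condition $b+d+\sigma<-n$ being what makes the second integral converge — and both are independent of $x$. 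Combining the two contributions proves part~(1); the noise enters only through an $\alpha$-independent term $\lesssim E$ together with a term of the same order $(\alpha\beta(x))^d$. Part~(2) then follows by raising the pointwise bound to the power~$p$ and integrating over $\Re^n$ with $c\le\beta\le C$; for $p=\infty$ it is just the uniform estimate in part~(1).

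The only real obstacle is in the approximation part: one must check that $A^{2(b+d)}(\Re^n)$ — the exponent $2(b+d)$ being forced by the two structural bounds on $\widehat\gamma$ and $\widehat\phi$ — together with the standing $L^1\cap L^2$ hypothesis renders both weights $|\xi|^d$ and $|\xi|^{2d}\xxi^{2b}$ integrable against $|\widehat f|$; the low-frequency part, where $A^{2(b+d)}$ gives no control, is handled only because $f\in L^1$ makes $\widehat f$ bounded. Once the algebraic identity, the filter bound, and the exponent bookkeeping are in place, the noise part and part~(2) are routine.
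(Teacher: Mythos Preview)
Your decomposition and your treatment of the approximation term match the paper's proof essentially line by line: the same algebraic identity for $1-p_{\alpha,\beta}\widehat\gamma$, the same use of $|1-\widehat\phi(\xi)|\le c|\xi|^d$ and $|\widehat\gamma(\xi)|^{-1}\le a\xxi^b$, and the same absorption into $\int\xxi^{2(b+d)}|\widehat f|\ud\xi$. The gap is in the noise term. You bound it by
\[
aE\int_{\Re^n}\xxi^{b+\sigma}\ud\xi\;+\;acE\,(\alpha\beta(x))^d\int_{\Re^n}\xxi^{b+d+\sigma}\ud\xi,
\]
and you explicitly acknowledge that the first summand is $\alpha$-independent. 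But a nonzero $\alpha$-independent constant is \emph{not} $O((\alpha\beta(x))^d)$ as $\alpha\to0$, so your estimates do not yield part~(1) as stated. The same obstruction blocks part~(2): a positive constant is not in $L^p(\Re^n)$ for finite $p$, so raising your pointwise bound to the $p$-th power and integrating over $\Re^n$ gives $+\infty$, not something controlled by $\|(\alpha\beta)^d\|_{L^p}^p$.

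The paper handles the noise differently: it bounds the noise integral directly as
\[
acE\,(\alpha\beta(x))^d\int_{\Re^n}\xxi^{b+d+\sigma}\ud\xi,
\]
thereby extracting the factor $(\alpha\beta(x))^d$ from the noise contribution as well. In effect this step replaces your estimate $|\widehat\phi(\alpha\beta(x)\xi)|\le1+c(\alpha\beta(x))^d|\xi|^d$ by $|\widehat\phi(\alpha\beta(x)\xi)|\le c(\alpha\beta(x))^d\xxi^d$ --- which is the hypothesis stated for $|1-\widehat\phi|$, applied to $|\widehat\phi|$. To recover the paper's conclusion you must either justify a bound on $|\widehat\phi|$ (not $|1-\widehat\phi|$) that carries the small factor $(\alpha\beta(x))^d$, or else reconcile the constant noise floor your computation produces with the statement of the theorem.
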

\begin{proof}
For all $\alpha\in(0,1)$ and $x\in\Re^n$, we have
\begin{eqnarray*}
|f(x)-f_{\alpha,\beta}^{(\delta)}(x)|&=&\Biggl|\int_{\xi\in\Re^n}e^{2\pi ix\xi}\widehat{f}(\xi)\left(1-\frac{|\widehat\gamma(\xi)|^2\widehat{\phi}(\alpha\beta(x)\xi)}{|\widehat\gamma(\xi)|^2+|1-\widehat\phi(\alpha\beta(x)\xi)|^2}\right)\ud\xi\Biggr.\\&+&\Biggl.\int_{\xi\in\Re^n}e^{2\pi ix\xi}\frac{\overline{\widehat{\gamma}}(\xi)\widehat\phi(\alpha\beta(x)\xi)}{|\widehat\gamma(\xi)|^2+|1-\widehat\phi(\alpha\beta(x)\xi)|^2}\widehat\delta(\xi)\ud\xi\Biggr|\\&\le&\int_{\xi\in\Re^n}|\widehat f(\xi)||1-\widehat\phi(\alpha\beta(x)\xi)|\left(1+\frac{|1-\widehat\phi(\alpha\beta(x)\xi)|}{|\widehat\gamma(\xi)|^2}\right)\ud\xi\\&+&\int_{\xi\in\Re^n}\frac{|\widehat\phi(\alpha\beta(x)\xi)|}{|\widehat\gamma(\xi)|}|\widehat\delta(\xi)|\ud\xi\\&\le&c(\alpha\beta(x))^d\int_{\xi\in\Re^n}|\xi|^d|\widehat f(\xi)|\left(1+a^2c(\alpha\beta(x))^d|\xi|^d\xxi^{2b}\right)\ud\xi\\&+&acE(\alpha\beta(x))^d\int_{\xi\in\Re^n}\xxi^{b+d+\sigma}\ud\xi\\&\le& (\alpha\beta(x))^d\Bigl(c\left(1+a^2c\|\beta\|_{L^{\infty}(\Re^n)}^d\right)\int_{\xi\in\Re^n}\xxi^{2(b+d)}|\widehat f(\xi)|\ud\xi\Bigr.\\&+&\Bigr.acE\frac{2\pi^{n/2}}{\Gamma(n/2)|n+b+d+\sigma|}\Bigr).
\end{eqnarray*}
\end{proof}


We next investigate the trade-off between the highest chosen local resolution and the resulting global $L^2$-norm stability of the solution of the deconvolution problem, under mild assumptions on the convolution kernel $\gamma$ and the mollifier $\phi$. Thus, let the Fourier transform of the mollifier $\phi$ be a radial function, $\widehat\phi(\xi)=\Phi(|\xi|)$ for $\xi\in\Re^n$, and furthermore such that
\[
\Phi(t)\le c(1+t)^{-d},\quad t\ge0,
\]
for some positive constants $c$ and $d$. We also assume that the Fourier transform of the convolution kernel $\gamma$ satisfies
\[
a^{-1}\cb{\xi}^{-b}\le|\widehat\gamma(\xi)|\le a\cb{\xi}^b,\quad\xi\in\Re^n,
\]
for some constants $a\ge1$ and $b>0$. From~\eqref{variable-resolution} it follows that the mapping $g\mapsto f_{\alpha,\beta}$ can be written
\[
f_{\alpha,\beta}(x)=\int_{\xi\in\Re^n}e^{2\pi ix\xi}p_{\alpha,\beta}(x,\xi)\widehat{g}(\xi)\ud\xi,\quad x\in\Re^n,
\]
with the 'symbol'
\[
p_{\alpha,\beta}(x,\xi)=\frac{\overline{\widehat{\gamma}(\xi)}\Phi(\alpha\beta(x)|\xi|)}{|\widehat\gamma(\xi)|^2+(1-\Phi(\alpha\beta(x)|\xi|))^2},\quad x\in\Re^n,\,\,\xi\in\Re^n,
\]
and with the corresponding Schwartz kernel
\[
k_{\alpha,\beta}(x,y)=\int_{\xi\in\Re^n}e^{2\pi i\xi(x-y)}p_{\alpha,\beta}(x,\xi)\ud\xi,\quad x,y\in\Re^n.
\]
Assume in the following that the data $g$ are supported in a subset $\Omega\subseteq\Re^n$, and that we restrict the solution $f_{\alpha,\beta}$ to $\Omega$; then the Schwartz kernel $k_{\alpha,\beta}$ of the mapping $g\mapsto f_{\alpha,\beta}$ is defined for $(x,y)\in\Omega^2$. Write $f_{\alpha,\beta,j}$ for the reconstructions of $f$ from the data $g_1$ and $g_2$, respectively.
\begin{theorem}[\textbf{Resolution vs. stability trade-off}]\label{thm:stabilo-1}
There is a positive constant~$C$ that depends precisely on $a$, $b$, $c$, $d$, $n$, and the size $\mu(\Omega)=\int_{\Omega}\ud x$ of the domain $\Omega$, such that 
\[
\|f_{\alpha,\beta,1}-f_{\alpha,\beta,2}\|_{L^p(\Omega)}\le C\left(\alpha\inf_{x\in\Omega}\beta(x)\right)^{-n-b}\| g_1- g_2\|_{L^p(\Omega)},\quad p\in[1,\infty],
\]
as $\alpha\inf_{x\in\Omega}\beta(x)\searrow0$.
\end{theorem}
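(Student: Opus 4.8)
The plan is to estimate the Schwartz kernel $k_{\alpha,\beta}(x,y)$ pointwise and then invoke the Schur test (for $L^p$ boundedness with $p\in[1,\infty]$ it suffices to bound $\sup_x\int_\Omega|k_{\alpha,\beta}(x,y)|\ud y$ and $\sup_y\int_\Omega|k_{\alpha,\beta}(x,y)|\ud x$, and here the kernel depends on $x-y$ inside the exponential but on $x$ also through $\beta(x)$, so one controls both via a uniform-in-$x$ bound on the integrand). Concretely, since $g_1-g_2$ is supported in $\Omega$ and we restrict to $\Omega$, we have $f_{\alpha,\beta,1}(x)-f_{\alpha,\beta,2}(x)=\int_\Omega k_{\alpha,\beta}(x,y)(g_1-g_2)(y)\ud y$, so everything reduces to bounding $\int_{\xi\in\Re^n}|p_{\alpha,\beta}(x,\xi)|\ud\xi$ uniformly in $x\in\Omega$, because $|k_{\alpha,\beta}(x,y)|\le\int|p_{\alpha,\beta}(x,\xi)|\ud\xi$ and then the double integral over $\Omega^2$ contributes only the factor $\mu(\Omega)$.

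Next I would estimate the symbol. Using $|\widehat\gamma(\xi)|\le a\xxi^b$ and $0\le\Phi\le c$ (so in particular the denominator is bounded below by $|\widehat\gamma(\xi)|^2$ as well as by a positive constant), write
\[
|p_{\alpha,\beta}(x,\xi)|=\frac{|\widehat\gamma(\xi)|\,\Phi(\alpha\beta(x)|\xi|)}{|\widehat\gamma(\xi)|^2+(1-\Phi(\alpha\beta(x)|\xi|))^2}\le\frac{\Phi(\alpha\beta(x)|\xi|)}{|\widehat\gamma(\xi)|}\le a\xxi^{b}\,\Phi(\alpha\beta(x)|\xi|).
\]
Now plug in the decay hypothesis $\Phi(t)\le c(1+t)^{-d}$; the trouble is that this only helps at high frequencies, and for the integral to converge one needs more decay than $d$, which is not assumed. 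This is where the rescaling $s=\alpha\beta(x)|\xi|$ enters: substituting $\xi=s\omega/(\alpha\beta(x))$ with $\omega\in S^{n-1}$ and $s=|\xi|\alpha\beta(x)$ turns $\ud\xi$ into $(\alpha\beta(x))^{-n}s^{n-1}\ud s\,\ud\omega$, and $\xxi^b\le(1+|\xi|)^b\le C_b\,(\alpha\beta(x))^{-b}(1+s)^b$ (using $\alpha\beta(x)\le\alpha C\le 1$ eventually, or just $(1+|\xi|)\le(1+s/(\alpha\beta(x)))\le(\alpha\beta(x))^{-1}(1+s)$), so
\[
\int_{\Re^n}|p_{\alpha,\beta}(x,\xi)|\ud\xi\le C_{a,b,c,n}\,(\alpha\beta(x))^{-n-b}\int_{0}^{\infty}(1+s)^{b-d}s^{n-1}\ud s.
\]
The remaining one-dimensional integral $\int_0^\infty s^{n-1}(1+s)^{b-d}\ud s$ is finite provided $d>b+n$; this is the real content and the step I expect to be the main obstacle — ensuring convergence of the radial integral. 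Looking at the hypotheses as stated, only $b,d>0$ are assumed, so strictly speaking one either needs the implicit extra hypothesis $d>n+b$ (consistent with the exponent $-n-b$ appearing in the conclusion, which is exactly what the rescaling produces regardless), or one uses the slightly sharper bound $\Phi(t)\le c\min(1,(1+t)^{-d})$ together with splitting $\int_0^1$ and $\int_1^\infty$ and absorbing the low-frequency part into the constant; in the write-up I would simply carry the factor $(\alpha\beta(x))^{-n-b}$ out front and bound the residual integral by a constant depending on $a,b,c,d,n$.

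Finally I would assemble the pieces: taking the supremum over $x\in\Omega$ replaces $\beta(x)$ by $\inf_{x\in\Omega}\beta(x)$ in the (negative-power) bound, giving $\sup_{x\in\Omega}\int_{\Re^n}|p_{\alpha,\beta}(x,\xi)|\ud\xi\le C'(\alpha\inf_{x\in\Omega}\beta(x))^{-n-b}$ with $C'=C'(a,b,c,d,n)$; hence $|k_{\alpha,\beta}(x,y)|\le C'(\alpha\inf_{x\in\Omega}\beta(x))^{-n-b}$ for all $(x,y)\in\Omega^2$. For $p=\infty$ this immediately bounds $\|f_{\alpha,\beta,1}-f_{\alpha,\beta,2}\|_{L^\infty(\Omega)}$ by $C'\mu(\Omega)(\alpha\inf_\Omega\beta)^{-n-b}\|g_1-g_2\|_{L^\infty(\Omega)}$; for $p=1$ one integrates $|f_{\alpha,\beta,1}-f_{\alpha,\beta,2}|$ over $x\in\Omega$, applies Tonelli, and again picks up $\mu(\Omega)$ from the $x$-integral of the uniform kernel bound; for intermediate $p$, Riesz–Thorin interpolation (or the Schur test directly) gives the same constant $C=C'\mu(\Omega)$. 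This yields the claimed inequality with $C$ depending precisely on $a,b,c,d,n$ and $\mu(\Omega)$, valid as $\alpha\inf_{x\in\Omega}\beta(x)\searrow0$ (which is where the $(\cdot)^{-n-b}$ blow-up is the relevant regime and where $\alpha\beta(x)\le1$ may be assumed to tidy the elementary estimate $(1+|\xi|)\le(\alpha\beta(x))^{-1}(1+s)$).
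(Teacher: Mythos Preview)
Your proposal is correct and follows the same overall architecture as the paper: bound the symbol $p_{\alpha,\beta}$ by $\mathrm{const}\cdot\langle\xi\rangle^{b}(1+\alpha\beta(x)|\xi|)^{-d}$, integrate in $\xi$ to control the Schwartz kernel on $\Omega\times\Omega$, and then invoke a Schur-type criterion (the paper cites Taylor, \emph{Partial Differential Equations I}, Proposition~5.1) to pass to $L^p$ for all $p\in[1,\infty]$.

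The one substantive difference is in how the key integral $\int_{\Re^n}\langle\xi\rangle^{b}(1+B|\xi|)^{-d}\ud\xi$ is shown to be $O(B^{-n-b})$. The paper writes the radial integral explicitly in terms of Gauss hypergeometric functions ${}_2F_1$ and then appeals to the classical asymptotics of ${}_2F_1(a,b;c;z)$ as $z\to\infty$ to extract the $B^{-n-b}$ behaviour. Your rescaling $s=B|\xi|$ together with $(1+|\xi|)\le B^{-1}(1+s)$ for $B\le1$ gives the same exponent directly and leaves the elementary residual integral $\int_0^\infty s^{n-1}(1+s)^{b-d}\ud s$; this is cleaner and avoids the special-function machinery, at the cost of being slightly less sharp in the constant. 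You are also right to flag that convergence of the residual integral forces $d>n+b$: this hypothesis is equally necessary for the paper's hypergeometric representation to make sense (the integral diverges otherwise), so it is an implicit assumption in both arguments rather than a defect of yours.
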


\begin{proof}
For brevity, write $B=\alpha\inf_{x\in\Omega}\beta(x)$. We first note that
\begin{gather*}
\frac{1}{{\rm Area}(S^{n-1})}\int_{\xi\in\Re^n}\cb{\xi}^{b}(1+B|\xi|)^{-d}\ud\xi=\int_{r=0}^{\infty}r^{n-1}(1+r)^{b}(1+Br)^{-d}dr\\=\frac{\Gamma(d-b-n)\Gamma(n-d)}{\Gamma(-b)}B^{-d}{}_2F_1(d,d-b-n;1-n+d;B^{-1})\\+\frac{(n-1)!\Gamma(d-n)}{\Gamma(d)}B^{-n}{}_2F_1(n,-b;1+n-d;B^{-1}),
\end{gather*}
and then recall the well-known fact that
\[
{}_2F_1(a,b;c;z)\sim\frac{\Gamma(b-a)\Gamma(c)}{\Gamma(b)\Gamma(c-a)}(-z)^{-a}+\frac{\Gamma(a-b)\Gamma(c)}{\Gamma(a)\Gamma(c-b)}(-z)^{-b}
\]
as $z\rightarrow\infty$. There is thus a constant $C'$ dependent only on $b$, $d$, and $n$, and satisfying
\begin{eqnarray*}
\lefteqn{\int_{\xi\in\Re^n}\cb{\xi}^{b}(1+B|\xi|)^{-d}\ud\xi}\\
&\le&
C'{\rm Area}(S^{n-1})\\
&\times&
\Bigl((-1)^{-d}\frac{\Gamma(n-d)\Gamma(-b-n)\Gamma(1-n+d)}{\Gamma(-b)\Gamma(1-n)}\Bigr.\\
&+&
B^{-n-b}(-1)^{b+n-d}\frac{\Gamma(d-b-n)\Gamma(n-d)\Gamma(b+n)\Gamma(1-n+d)}{\Gamma(-b)\Gamma(d)\Gamma(1+b)}\\
&+&
(-1)^n\frac{(n-1)!\Gamma(d-n)\Gamma(-b-n)\Gamma(1+n-d)}{\Gamma(-b)\Gamma(d)\Gamma(1-d)}\\
&+&
\Bigl.B^{-n-b}(-1)^{b}\frac{\Gamma(d-n)\Gamma(b+n)\Gamma(1+n-d)}{\Gamma(d)\Gamma(1+n-d+b)}\Bigr)\\
&=&
O(B^{-n-b})\,\,\,{\rm as}\,\,\,B\searrow0.
\end{eqnarray*}
Next, for all $\alpha>0$ and $y\in\Omega$, we have
\begin{eqnarray*}
\int_{x\in\Omega}|k_{\alpha,\beta}(x,y)|\ud x&\le&\int_{x\in\Omega}\int_{\xi\in\Re^n}\frac{|\widehat\gamma(\xi)|\Phi(\alpha\beta(x)|\xi|)}{|\widehat{\gamma}(\xi)|^2+(1-\Phi(\alpha\beta(x)|\xi|))^2}\ud\xi \ud x\\&\le&\mu(\Omega)a^3c\int_{\xi\in\Re^n}\cb{\xi}^{b}(1+B|\xi|)^{-d}\ud\xi,
\end{eqnarray*}
and similarly, for all $\alpha>0$ and $x\in\Omega$,
\[
\int_{y\in\Omega}|k_{\alpha,\beta}(x,y)|dy\le\mu(\Omega)a^3c\int_{\xi\in\Re^n}\cb{\xi}^{b}(1+B|\xi|)^{-d}\ud\xi.
\]
Consequently, by~\cite[Proposition 5.1, p. 573]{TaylorI},
\[
\|g\mapsto f_{\alpha,\beta}\|_{L^{\tau}(\Omega)\rightarrow L^{\tau}(\Omega)}\le C'\mu(\Omega)a^3c(\alpha\inf_{z\in\Omega}\beta(z))^{-n-b}
\]
for all $\tau\in[1,\infty]$ and for sufficiently small positive $\alpha\inf_{x\in\Omega}\beta(x)$. \qedhere
\end{proof}

\section{Numerical examples, numerical stability analysis, additional constraints}\label{section:numerical}

\subsection{Parameter selection}
\label{sec:icn}

A central issue in inverse problems is stability, which refers to the sensitivity of the solution with respect to perturbations in the data. In the linear setting, stability can be assessed in different but complementary ways. One classical approach is to examine the norm of the reconstruction operator, which quantifies the amplification of absolute errors from the data to the solution. Alternatively, the condition number of the operator governs the propagation of relative errors. A large condition number indicates that small relative perturbations in the data can lead to significant relative errors in the solution. These notions of stability are key when choosing an appropriate regularization strategy, particularly in the determination of regularization parameters, which must balance stability with fidelity to the observed data.

In this work, we adopt a stability-driven approach to regularization. Rather than selecting the regularization parameter based solely on fidelity or resolution criteria, we begin by fixing a prescribed upper bound on the instability level, quantified either by the norm of the reconstruction operator or by the condition number, depending on the context. Within this constraint, we then seek the best achievable average resolution compatible with the desired stability. Once this global balance is established, we explore the possibility of enhancing the local resolution in a specific region of interest. The underlying idea is that, by allowing a controlled degradation of resolution in less critical areas, one can improve the reconstruction quality where it matters most, without violating the global stability constraint.

In the next subsections, we address deblurring problems and rely on standard tools from numerical analysis to assess the stability of our reconstruction formula. As a first step, we consider a uniform resolution across the entire domain, and determine a suitable average level of resolution by fixing an admissible threshold for the stability measure.
Building on this global analysis, we then manually identify a small region of interest and allow $\beta$ to vary spatially across the image. By concentrating resolution in this selected area and reducing it elsewhere, we demonstrate that localized enhancement can be achieved without exceeding the predefined global stability bound.

For a given reconstruction, starting with uniform $\beta$, we should be able to improve local resolution with same global stability.

\subsection{Deconvolution in dimension one}
\begin{figure}[ht]
    \centering
    \includegraphics[scale=0.5]{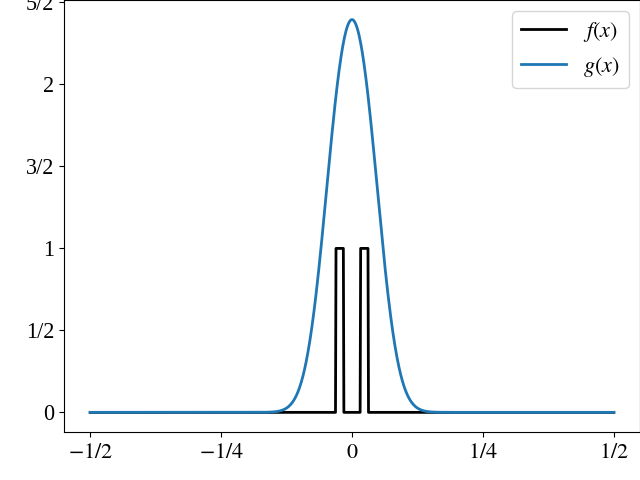}
    \caption{The ground truth $f(x)$ and the noiseless data $g(x)=\gamma\ast f(x)$.}
    \label{fig:fg}
\end{figure}

\begin{figure}[ht]
    \centering
    \includegraphics[width=0.49\linewidth]{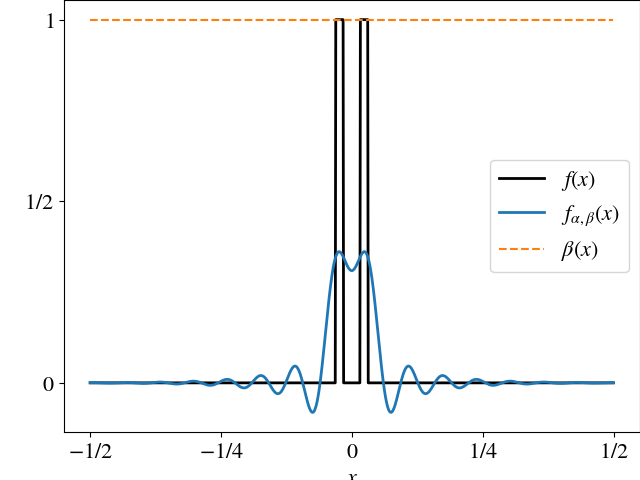}
\includegraphics[width=0.49\linewidth]{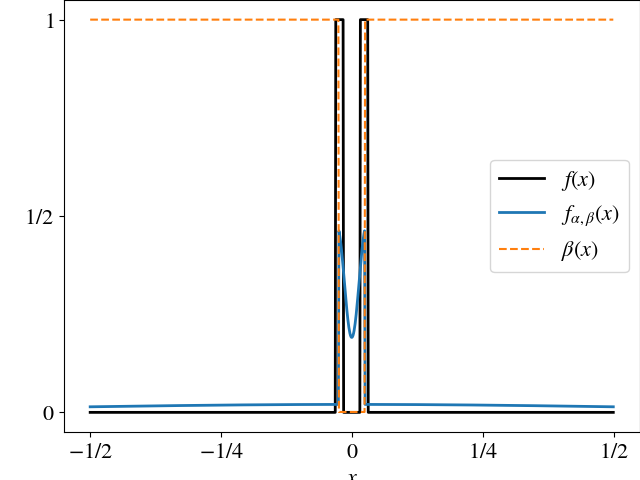}\\
\includegraphics[width=0.49\linewidth]{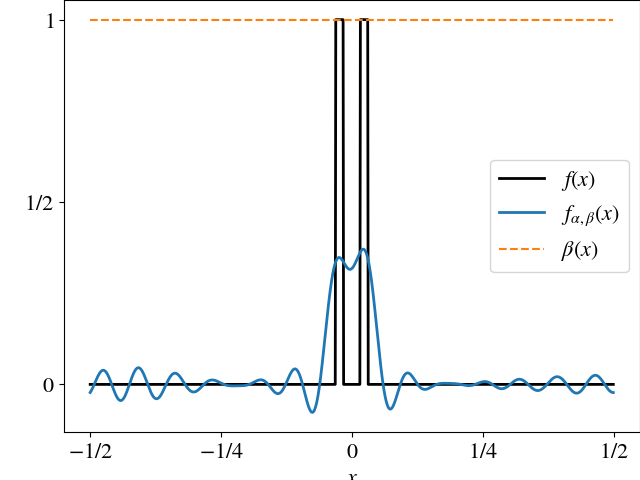}
\includegraphics[width=0.49\linewidth]{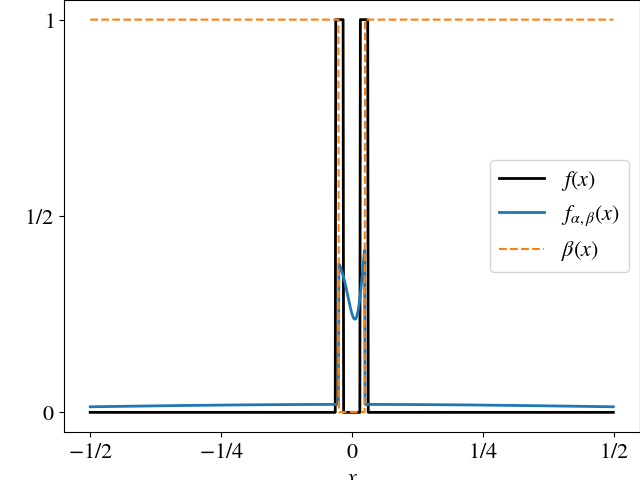}
    \caption{Deconvolution using $\alpha=10^{-3}$ and constant-valued $\beta$ (left column) vs. using $\alpha=0.262$ and single-dip $\beta$ (right column). Estimated condition number is approximately $7.6$ in all cases. Noise level in data $10^{-4}$ (top row) and $5\cdot10^{-2}$ (bottom row).}
    \label{fig:res1D}
\end{figure}
We now give numerical examples of constant-resolution and variable-resolution deconvolution in dimension one. Figure~\ref{fig:fg} shows the graphs of our chosen target (ground truth) piecewise constant function with 'pulses' of 1/64 width and 1/32 apart edge-to-edge, and of the noiseless data $g(x)=\gamma\ast f(x)$, $x\in\Re$, where we use the Gaussian convolution kernel $\gamma(x)=0.1\exp(-x^2/0.05^2)$, $x\in\Re$. Figure~\ref{fig:res1D} shows the results of the application of our reconstruction formula~\eqref{variable-resolution} to noiseless and noisy data $g(x)$ for various choices of~$\alpha$ and $\beta(x)$.

Our variable-resolution deconvolution reconstructs the localized high-frequency features of the target function $f$ better than the classical deconvolution, while maintaining the global stability.

\subsection{Deconvolution in dimension two}

Figures \ref{fig:f_and_gnoisy} and \ref{fig:reconstructions} illustrate the reconstruction of a modified $601 \times 601$ blurred and noisy Shepp-Logan phantom with a piecewise constant $\beta$. A region of interest with a low value of $\beta$ was chosen around 2 blobs with high intensity. The numerical example was set up in MATLAB. The ground truth image was blurred with a Gaussian filter
\[
\gamma(x,y) = \frac{1}{2\pi\sigma^{2}}
\exp\!\left(-\,\frac{x^{2}+y^{2}}{2\sigma^{2}}\right)
\] with $\sigma=7$
and Gaussian i.i.d. noise was added to the blurred image. The reconstruction was carried out using a discretization of formula \eqref{eqn:falpha}. Fourier transforms of the functions~$g$, $\gamma$ and $\phi$ were computed using the Fast Fourier Transform (FFT). Let $\bm{{g}}\in \Re^{601\times 601}$ be the values of the function~$g$ evaluated on a pixel grid. Let $\bm{\widehat{g}}$ be the FFT of $\bm{g}$. We calculate the Fourier transforms of the functions $\phi$ and $\gamma$ analytically. The Fourier transform of~$\gamma$ can be evaluated on the same grid as $\bm{\widehat{g}}$ giving us $\bm{\widehat{\gamma}}$. For each value of $\beta(x)$, we can evaluate $\widehat{\phi}(\alpha \beta(x)\xi)$ on that same grid, giving us $\bm{\widehat{\phi}(\alpha\beta(x))}$.

The reconstruction formula \eqref{eqn:falpha} is discretized using the low order scheme
\begin{equation*}
    (R_{\alpha, \beta}g)(x_1,x_2) = \sum_{j,k=1}^{601}  e^{2\pi i (x_1\xi_{1j}+x_2\xi_{2k})} \frac{\overline{\bm{\widehat{\gamma}}}_{jk}  \bm{\widehat{\phi}(\alpha\beta(x))}_{jk}}{|\bm{\widehat{\gamma}}_{jk}|^2 + |1-\bm{\widehat{\phi}(\alpha\beta(x))}_{jk}|^2} \frac{1}{(\Delta \xi)^2}
\end{equation*}

Using the right choice of grids, we can evaluate this sum using a partial inverse FFT. Letting $\beta$ be piecewise constant with $L$ different values, $\beta_1, \hdots, \beta_L$, we get exactly $L$ arrays $\bm{\widehat{\phi}(\alpha\beta_l})$, $l=1,\hdots, L$. Thus the reconstruction can be computed $L$ different partial inverse FFTs.
In the figure below, we consider 
\[
\beta(x)= \begin{cases}
    \beta_1, \quad &{x \in \Omega_1, }\\
    \beta_2, \quad &{x \in \Omega_2, }
\end{cases}
\]
where $\Omega_1$ is the region of interest.
Let $g_n = g + \varepsilon$, where $\varepsilon \sim \mathcal{N}(0,\sigma^2)$ with $\sigma = 10^{-2}$, and let $\delta g = g_n - g$ denote the (small) perturbation in the data. We compute
\[
c_1 = \frac{\|\delta g\|}{\|g\|}.
\]
Using our reconstruction operator, let $f_{\mathrm{rec}0}$ be the reconstruction from $g$ and $f_{\mathrm{rec}}$ the reconstruction from $g_n$. Define $\delta f = f_{\mathrm{rec}} - f_{\rm rec0}$, and compute
\[
c_2 = \frac{\|\delta f\|}{\|f_{\rm rec0}\|}.
\]
An approximate lower bound on the condition number $\kappa$ (the “global stability indicator”) is then
\[
\kappa \approx \frac{c_2}{c_1}.
\]
We run the code twice: once with constant $\beta$, obtaining $\kappa \approx 0.06$, and once with variable $\beta$, also obtaining $\kappa \approx 0.06$.

\begin{figure}[ht]
    \centering
    \begin{subfigure}{0.48\textwidth}
        \centering
        \includegraphics[width=\linewidth]{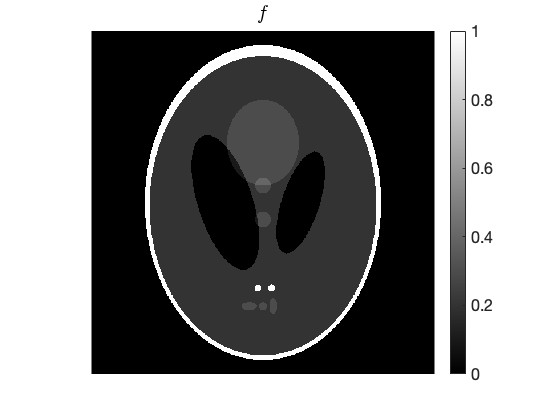}
        \caption{Ground truth $f$.}
    \end{subfigure}
    \begin{subfigure}{0.48\textwidth}
        \centering
        \includegraphics[width=\linewidth]{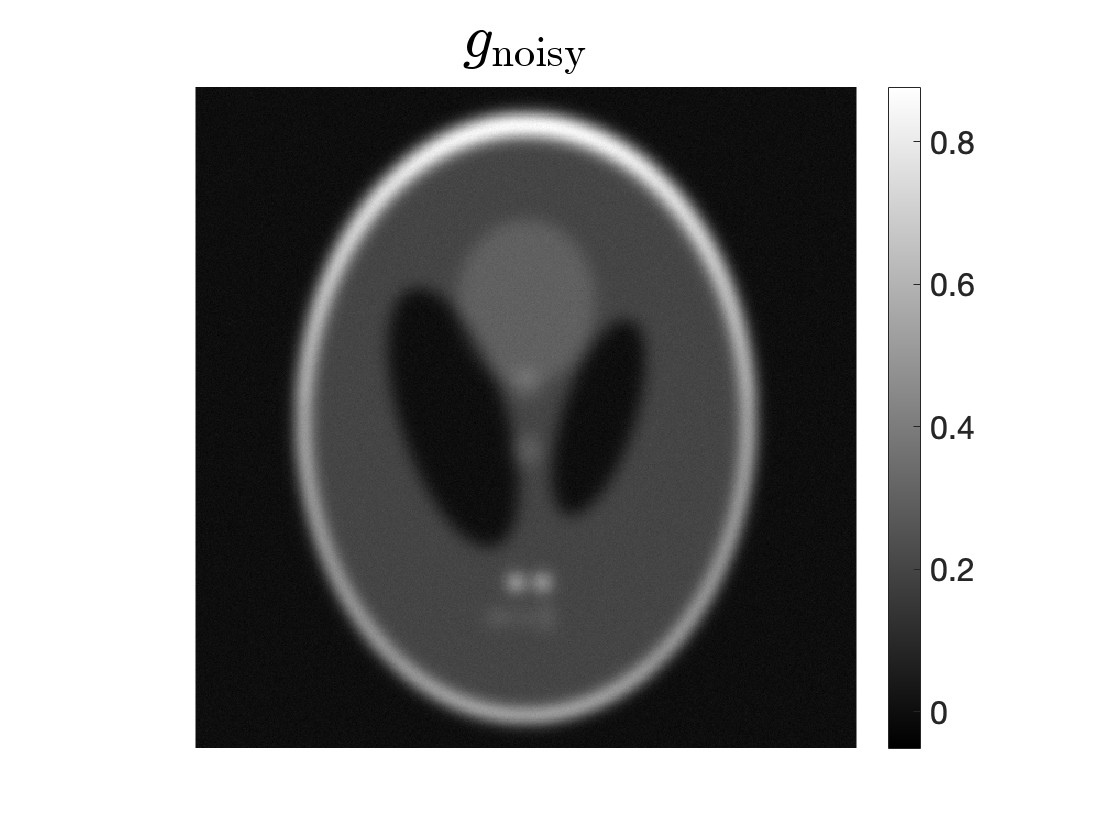}
        \caption{Noisy blurred data $g_{\rm noisy}=\gamma\ast f+\varepsilon$.}
    \end{subfigure}
    \caption{Ground truth image and corresponding noisy blurred data.}
    \label{fig:f_and_gnoisy}
\end{figure}

\begin{figure}[ht]
    \centering
    \begin{subfigure}{0.48\textwidth}
        \centering
        \includegraphics[width=\linewidth]{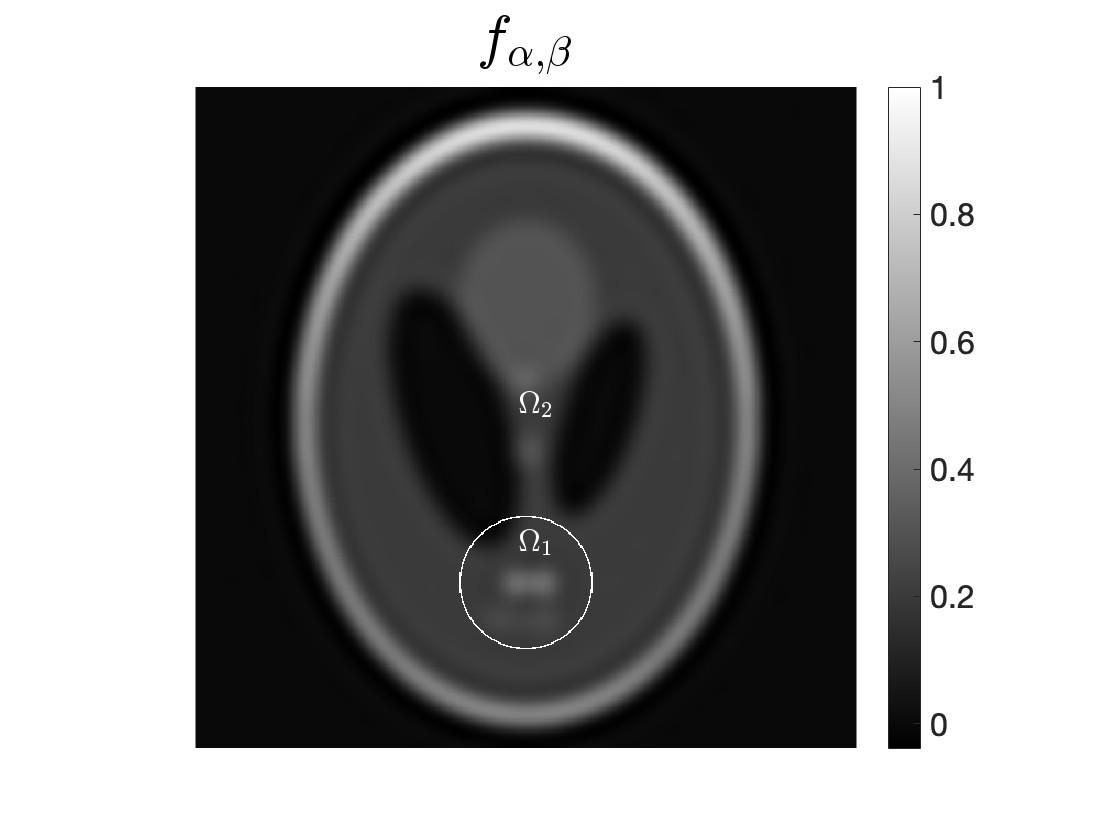}
   \caption{}
    \end{subfigure}
    \begin{subfigure}{0.48\textwidth}
        \centering
        \includegraphics[width=\linewidth]{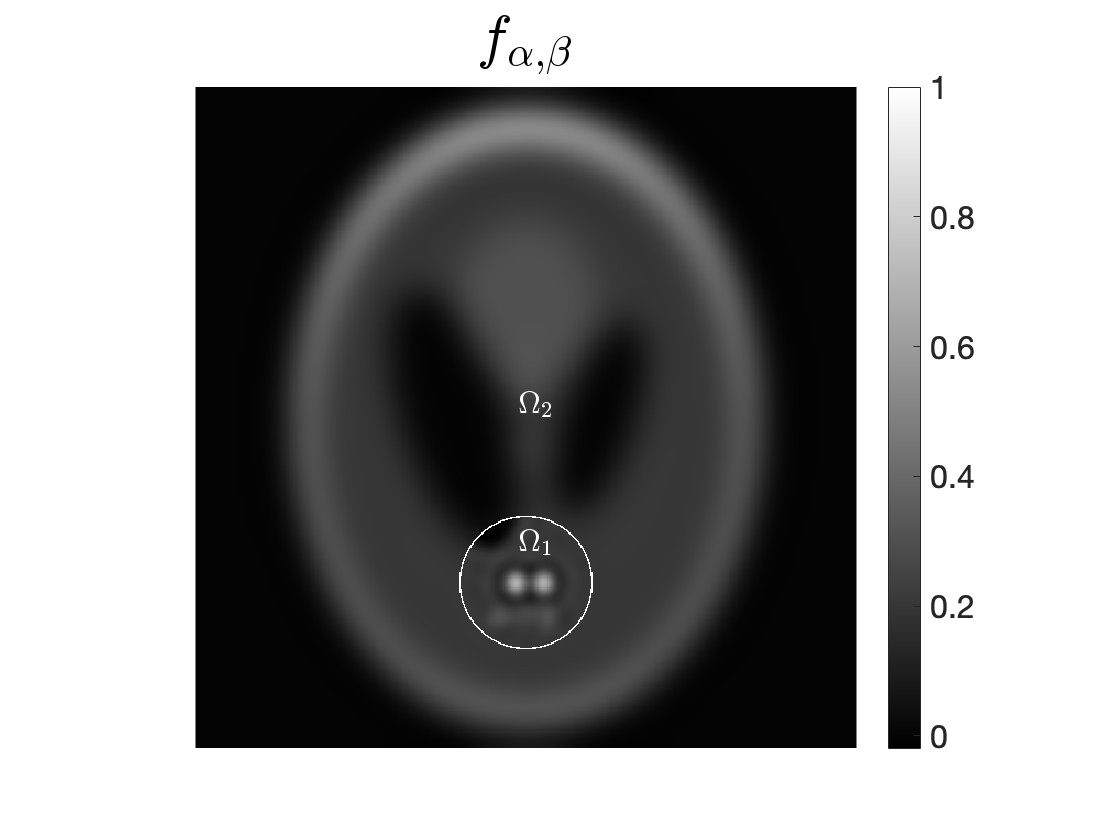}
        \caption{}
    \end{subfigure}
    \caption{(A) Reconstruction $f_{\alpha,\beta}$ with $\beta(x)=\beta_1=2.5$ in $\Omega_1$ and $\beta(x)=\beta_2=2.5$ in $\Omega_2$, having global stability $\kappa \approx 0.06$.\\
             (B) Reconstruction $f_{\alpha,\beta}$ with $\beta(x)=\beta_1=0.8$ in $\Omega_1$ and $\beta(x)=\beta_2=6.4$ in $\Omega_2$, having global stability $\kappa \approx 0.06$.}
    \label{fig:reconstructions}
\end{figure}
\begin{figure}[ht]
    \centering
    
    \begin{subfigure}{0.45\textwidth}
        \centering
        \includegraphics[width=\linewidth]{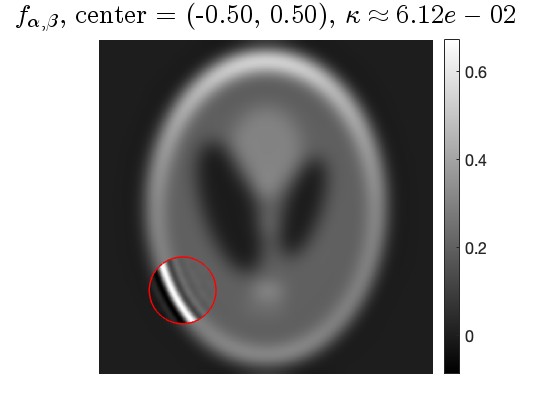}
        \caption{}
    \end{subfigure}
    \begin{subfigure}{0.45\textwidth}
        \centering
        \includegraphics[width=\linewidth]{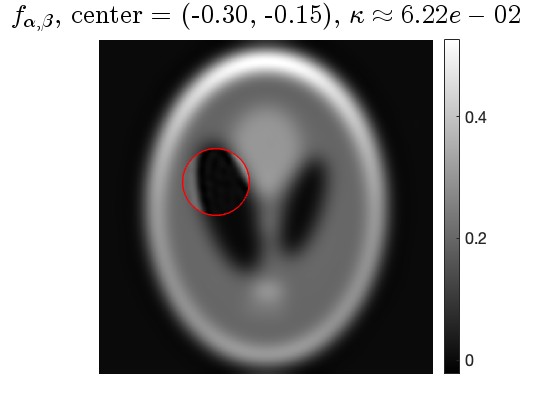}
        \caption{}
    \end{subfigure}
    
    \begin{subfigure}{0.45\textwidth}
        \centering
        \includegraphics[width=\linewidth]{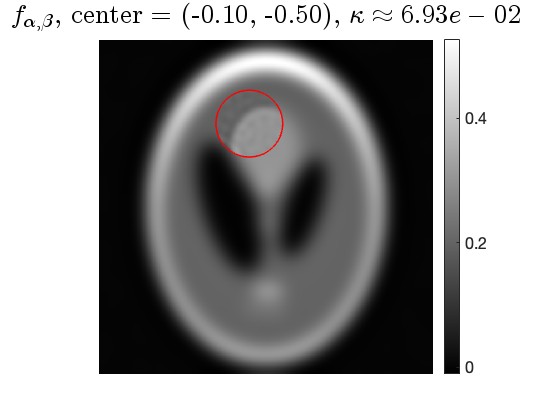}
        \caption{}
    \end{subfigure}
    \begin{subfigure}{0.45\textwidth}
        \centering
        \includegraphics[width=\linewidth]{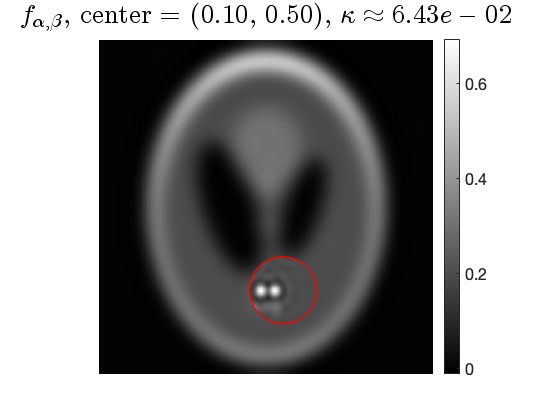}
        \caption{}
    \end{subfigure}
    
    \begin{subfigure}{0.45\textwidth}
        \centering
        \includegraphics[width=\linewidth]{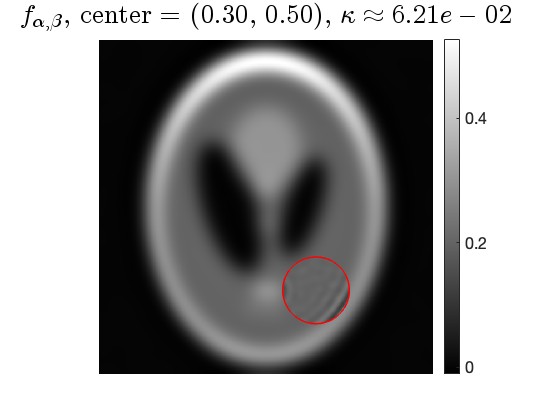}
        \caption{}
    \end{subfigure}
    \begin{subfigure}{0.45\textwidth}
        \centering
        \includegraphics[width=\linewidth]{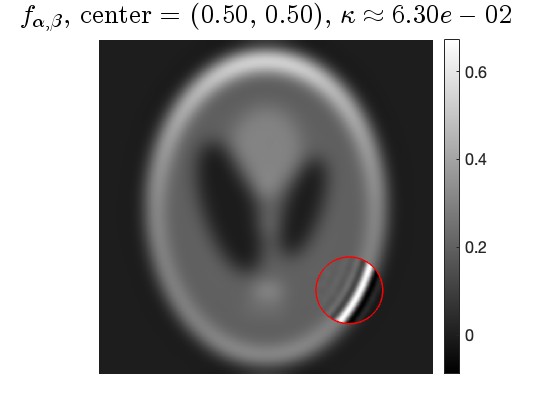}
        \caption{}
    \end{subfigure}
    
    \caption{Reconstructions $f_{\alpha,\beta}$ with different positions of the ROI and corresponding global stability estimate $\kappa$.}
    \label{fig:roi-kappa-snapshots}
\end{figure}
 Figure \ref{fig:roi-kappa-snapshots} shows six panels illustrating the reconstructions $f_{\alpha,\beta}$ for various placements of the circular ROI "$\Omega_1$" in the image domain. Inside $\Omega_1$ we use $\beta(x) = \beta_{\text{low}}$, while in the complement $\Omega_2$ we use $\beta(x) = \beta_{\text{high}}$, so that moving the circle changes where the weaker/stronger regularisation is applied. Each image shows the reconstructed solution together with the current location of $\Omega_1$ (red circle) and the corresponding global stability constant $\kappa$, computed as the ratio between the relative perturbations in the reconstruction and in the data. This allows us to visualise how the spatial position of the ROI affects the overall stability of the inverse problem.

It is natural to ask whether our 'scanning' of the image using multi-resolution deconvolution, as illustrated in Figure~\ref{fig:roi-kappa-snapshots}, can instead be effected simply by a series of constant-resolution reconstructions of the image within the moving small disk. We submit that the latter approach would be disadvantageous, in that the stability of the reconstruction is generally reduced when the size of the support of the object under reconstruction is reduced. Thus, maintaining the same stability $\kappa$ would result in a comparatively poorer resolution of the reconstruction within the small disk, see Fig.~\ref{fig:worse}. In this figure, the small disk is for simplicity exchanged with an equal-area rectangle.
\begin{figure}
\centering
    \includegraphics[scale=0.2]{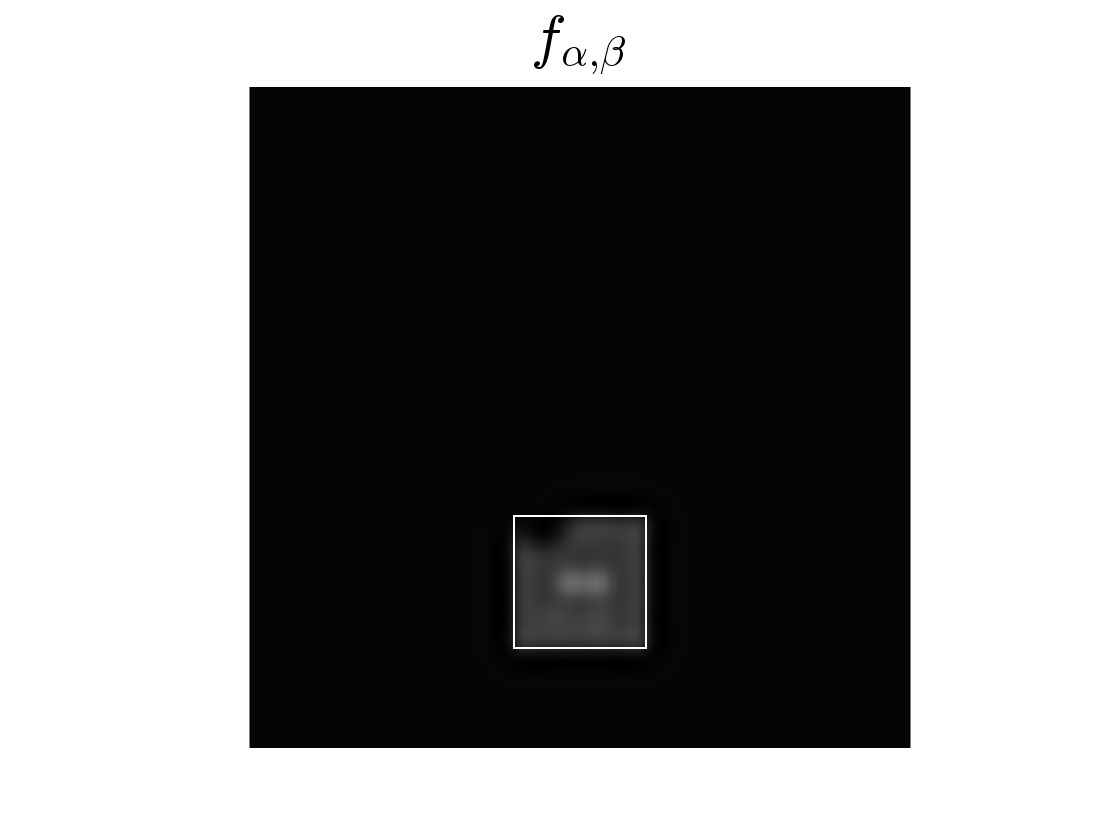}
    \caption{A reconstruction that keeps the stability estimate at $\kappa\approx0.06$ but only takes into account a small piece of the data. Compare the quality with our reconstructions in the small disks in Figure~\ref{fig:roi-kappa-snapshots}.}
    \label{fig:worse}
\end{figure}

\section{Conclusions and outlook}\label{section:conclusion}

We have developed and analyzed a variational mollification framework for deconvolution that admits spatially varying resolution through a pseudodifferential‐like reconstruction operator. Our framework allows for local variation of resolution while keeping the global stability of the deconvolution problem constant. Under mild assumptions on the convolution kernel $\gamma$ and the mollifier $\phi$, we established pointwise convergence, norm convergence, and convergence rate results, allowing for noise in the data. We also obtained an explicit stability–versus–resolution trade-off that ties the global $L^{2}$ stability to the smallest local resolution level. Numerical experiments in one and two dimensions corroborate the theory. Specifically, we demonstrate selective sharpening without degrading global stability. Further work includes a generalization of the allowed regularities of the convolution kernel $\gamma$, the mollifier $\phi$, and the target function $f$, as well as a generalization of the noise statistics, in particular the inclusion of spatially variable noise and a treatment of the contribution of noise in the reconstruction formula in terms of a stochastic integral.

\section*{Acknowledgements}
This work was supported by the Villum Foundation grants no. 58857 and 25893.

\bibliographystyle{siam}
\bibliography{variable-resolution}{}

\end{document}